\newtheorem{theorem}{Theorem}[section]
\newtheorem{corollary}[theorem]{Corollary}
\newtheorem{lemma}[theorem]{Lemma}
\newtheorem{definition}[theorem]{Definition}
\newtheorem{remark}[theorem]{Remark}
\numberwithin{equation}{section}
\numberwithin{equation}{section}
\newcommand{\C}{\mathbb{C}}
\newcommand{\R}{\mathbb{R}}
\newcommand{\N}{\mathbb{N}}
\newcommand{\E}{\mathcal{E}}
\newcommand{\F}{\mathcal{F}}
\newcommand{\NP}[1]{\text{NP}({dd^c#1})^n}
\newcommand{\NNP}{\mathcal{N}_\text{NP}}
\newcommand{\Om}{\Omega}
\newcommand{\pshn}{\text{PSH}^-(\Omega)}
\newcommand{\ddc}{dd^c}
\newcommand{\psh}{\text{PSH}(\Omega)}
\begin{document}
\author{Thai Duong Do\textit{$^{1}$}, Ngoc Thanh Cong Pham\textit{$^{2}$}}
\address{\textit{$^{1}$}Institute for Artificial Intelligence, University of Engineering and Technology, Vietnam National University, Hanoi, Vietnam.}
\email{dtduong@vnu.edu.vn, duongdothai.vn@gmail.com}
\address{\textit{$^{2}$} Institute of Mathematics, Vietnam Academy of Science and Technology, 18, Hoang Quoc Viet, Hanoi, Viet Nam}
\email{ngocthanhcong.pham@gmail.com, cong.pnt.math@gmail.com}

\title{An existence theorem for non-pluripolar complex Monge-Amp\`ere type equations on hyperconvex domains}

\subjclass[2000]{31C10, 32U15, 32U40}
\keywords{complex Monge - Amp\`ere equations, pluripolar sets, non-pluripolar measures}
\date{\today}
\maketitle
\begin{abstract}
In this paper, we study the non-pluripolar complex Monge-Amp\`ere measure on bounded domains in \( \mathbb{C}^n \). We establish a general existence theorem for a non-pluripolar complex Monge-Amp\`ere type equation with prescribed singularity on a bounded hyperconvex domain in \( \mathbb{C}^n \).
\end{abstract}

\tableofcontents

\section{Introduction}
Let $\Omega$ be a bounded domain in $\C^n$. We write $d=\partial+\overline{\partial}$ and $d^c=i(\overline{\partial}-\partial)$. Denote by $\psh$ and $\pshn$ respectively the set of
 plurisubharmonic (psh) functions and the set of negative psh functions on $\Om$. The complex Monge-Amp\`ere operator of a smooth psh function $u$ on $\Om$  is a regular Radon measure given by 
	$$(dd^c u)^n =c_n \mbox{det}\begin{bmatrix}
			\frac{\partial^2 u}{\partial z_j\partial \bar{z}_k}
		\end{bmatrix} dV,$$ 
	where $c_n>0$ is a constant depending only on $n$ and $dV=\Big(\frac{i}{2}\Big)^n\prod_{j=1}^ndz_j\wedge d\overline{z}_j$ is the standard volume form.

A natural question that arises is to determine the domain of the Monge-Amp\`ere operators. 
In \cite{BT87}, Bedford and Taylor have investigated the plurifine topology, originally introduced by Fuglede in \cite{Fu86} as the weakest topology in which all psh functions are continuous. In \cite{BT82,BT87}, they have defined the non-pluripolar complex Monge-Amp\`ere measure, often referred to as the ``non-pluripolar part of the Monge-Amp\`ere operator'', for every psh function. For a negative psh function \( u \), the non-pluripolar Monge-Amp\`ere measure of $u$ is defined as follows
$$ \NP{u}=\lim_{M\rightarrow\infty} \mathbb{1}_{\{u > -M\}}(dd^c \max\{u, -M\})^n. $$ This yields a Borel measure that does not charge pluripolar sets and can have locally unbounded mass. Moreover, they have defined $(dd^cu)^n$ as a Radon measure in the case where $u$ is a continuous psh function and then in the case where $u$ is a locally bounded psh function.


In the global setting, for a compact complex K\"ahler manifold \((X, \omega)\), based on the idea of the non-pluripolar complex Monge-Amp\`ere measure in the local setting, in \cite{GZ07}, Guedj and Zeriahi have introduced the non-pluripolar complex Monge-Amp\`ere measure \((\omega+dd^c u)^n\) for any $\omega$-psh function $u$ as follows
$$(\omega+dd^c u)^n=\lim_{M\rightarrow\infty} \mathbb{1}_{\{u > -M\}}(\omega+dd^c \max\{u, -M\})^n. $$ Since the total mass of the measures $\mathbb{1}_{\{u > -M\}}(\omega+dd^c \max\{u, -M\})^n$ 
is uniformly bounded from above by $\int_X\omega^n$ and by Stokes theorem, $(\omega+dd^c u)^n$ is a well-defined finite positive Borel measure on $X.$
In \cite{BEGZ10}, for a smooth, closed real \((1,1)\)-form \(\theta\) on \(X\) whose cohomology class \(\alpha\) is big, Boucksom, Eyssidieux, Guedj, and Zeriahi have extended the results obtained in \cite{GZ07} to the case of arbitrary cohomology classes, and introduced the non-pluripolar complex Monge-Amp\`ere measure \((\theta + dd^c u)^n\) for any \(\theta\)-psh function \(u\). In \cite{DDL18, DDL21}, Darvas, Di Nezza, and Lu have studied the complex Monge-Amp\`ere equation with prescribed singularity types. They have introduced the concepts of model potentials and model-type singularities, and demonstrated that the equation is well-posed only when the potential has model-type singularities.

In local setting, when restricting attention to only hyperconvex domains, Cegrell has introduced finite energy classes of psh functions which are now known as Cegrell's classes (see \cite{Ceg04}). He has also proved that the class $\E$  is the largest subclass of the set of negative psh functions for which the complex Monge-Amp\`ere measure is a well-defined, regular Radon measure. After that, the domain of the Monge-Amp\`ere operators has been well understood after the work of B\l ocki (\cite{B06}). Recently, Cegrell's classes have been studied by many authors. For more details we refer the readers to \cite{Aha07,ACCP,ACH07,Ben09,Ben14,Ben11,BGZ,DD19,DDP20,NP09}.

In \cite{DDLP}, inspired by \cite{DDL18, DDL21},  we (together with Do and Le) have studied the non-pluripolar Monge-Amp\`ere measure for psh functions in a bounded domain in $\C^n$. We have introduced the notation of model psh functions in local setting: a function $u\in\pshn$ is called  model if $u=P[u],$ where
	\begin{multline*}
	P[u]=\big(\sup\{v\in\pshn:\ v\leq u+ O(1) \text{ on }\Omega,\ \liminf\limits_{\Omega\setminus N\ni z\rightarrow\xi_0}(u(z) -v(z))\geq0\ \forall\xi_0\in\partial\Omega \}\big)^*,
	\end{multline*}
	and $N=\{u=-\infty\}$. 
We have also studied a Dirichlet type problem for the non-pluripolar complex Monge-Amp\`ere equation  with prescribed singularity.

In this paper, we study the non-pluripolar complex Monge-Amp\`ere measure on bounded domains in $\C^n$. We also study the following Dirichlet type problem for the non-pluripolar complex Monge-Amp\`ere equation in bounded hyperconvex domains:
\begin{equation}\label{NPMAE}
		\begin{cases}
		\NP{u}=F(u, \cdot)d\mu,\\
		P[u]=\phi.
		\end{cases}
		\end{equation}
Denote by $\mathcal{N}_{NP}(\Om)$ (or $\mathcal{N}_{NP}$ for short) the set of negative psh functions $u$ on $\Om$ with smallest model psh majorant identically zero (i.e., $P[u]=0$).
 For every $H\in\pshn$, we denote
	$$\mathcal{N}_{NP}(H)=\{w\in\pshn: \mbox{there exists } v\in\mathcal{N}_{NP} \mbox{ such that } v+H\leq w\leq H \}.$$
Our main result is as follows.
\begin{theorem}\label{Thm L1}
    Let $\Omega$ be a bounded hyperconvex  domain in $\mathbb{C^n}$ and
$\mu $ be a nonnegative Borel measure that  vanishes on all pluripolar subsets of $\Omega$. Let $\phi$ be a model psh function on $\Om$. Assume that $F : \R \times \Om\to [0, +\infty )$ is a measurable function satisfying the following conditions:
\begin{itemize}
    \item[1)] For all $z\in \Om $ the function $t \mapsto F(t, z)$ is continuous and  nondecreasing;
\item[2)] For all $t\in \R $, the function $z\mapsto F(t,z) $ belongs to $L^1_{loc}(\Om, \mu)$;
\item[3)] There exists $\psi \in \NNP$ such that $$\NP{\psi} \geq F(\phi, \cdot)d\mu.$$ 
\end{itemize}
Then there exists uniquely determined function $u \in \NNP(\phi)$ that solves the non-pluripolar complex Monge-Amp\`ere equation (\ref{NPMAE}).
    \end{theorem}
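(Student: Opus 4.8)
The plan is to combine the classical variational/approximation machinery for complex Monge–Ampère equations with the comparison-type estimates for non-pluripolar measures developed in the earlier work on $\mathcal{N}_{NP}$-classes. First I would reduce the nonlinear problem to a fixed-point scheme: given $w\in\mathcal{N}_{NP}(\phi)$, the right-hand side $F(w,\cdot)\,d\mu$ is a nonnegative non-pluripolar measure dominated (by monotonicity in the first variable and hypothesis (3), together with $w\ge v+\phi$ where $v\in\mathcal{N}_{NP}$ forces $w$ to have the same model singularity as $\phi$, hence $F(w,\cdot)\le F(\phi,\cdot)$ is \emph{not} automatic — one must instead use that $w\le\phi$ so $F(w,\cdot)\le F(\phi,\cdot)$, which gives the needed bound $\le\NP{\psi}$). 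So the total mass is controlled, and one can solve the \emph{linear} Dirichlet problem $\NP{T(w)}=F(w,\cdot)\,d\mu$, $P[T(w)]=\phi$, uniquely in $\mathcal{N}_{NP}(\phi)$, invoking the existence theorem for the non-pluripolar Monge–Ampère equation with prescribed singularity (the Dirichlet-type problem studied in \cite{DDLP}). This defines an operator $T:\mathcal{N}_{NP}(\phi)\to\mathcal{N}_{NP}(\phi)$, and $u$ is a solution of \eqref{NPMAE} exactly when $u$ is a fixed point of $T$.

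Next I would set up the fixed-point argument. The natural method here is monotone iteration: start from $u_0=\phi$ (or from the subsolution built out of $\psi$), set $u_{j+1}=T(u_j)$, and show the sequence is monotone. Concretely, if $w_1\le w_2$ then $F(w_1,\cdot)\le F(w_2,\cdot)$ by condition (1), and by the comparison principle for the non-pluripolar Monge–Ampère operator on $\mathcal{N}_{NP}(\phi)$ (two functions with the same model potential $\phi$ whose non-pluripolar measures are ordered must themselves be ordered) one gets $T(w_1)\le T(w_2)$. Starting from $u_0=\phi$ we have $T(u_0)\le\phi=u_0$ (since $F(\phi,\cdot)\,d\mu\le\NP{\psi}$ and the solution lies in $\mathcal{N}_{NP}(\phi)$, hence $\le\phi$), so the iteration decreases; it is bounded below by the subsolution $\max\{\psi-\|\psi\|_{\text{near }\partial\Omega},\dots\}$ — more precisely by an element of $\mathcal{N}_{NP}(\phi)$ constructed from $\psi$ and $\phi$ whose non-pluripolar measure dominates $F(\phi,\cdot)\,d\mu\ge F(u_j,\cdot)\,d\mu$ for all $j$. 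Thus $u_j\downarrow u$ for some $u\in\mathcal{N}_{NP}(\phi)$, and one passes to the limit: $P[u]=\phi$ is stable under the limit (this uses that $\phi$ is a \emph{model} potential and the semicontinuity properties of $P[\cdot]$), while $\NP{u_j}\to\NP{u}$ weakly by the convergence theorem for non-pluripolar Monge–Ampère masses along decreasing sequences in Cegrell-type classes, and $F(u_j,\cdot)\,d\mu\to F(u,\cdot)\,d\mu$ by monotone (or dominated, using (3)) convergence together with continuity in $t$. Hence $\NP{u}=F(u,\cdot)\,d\mu$.

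For uniqueness, suppose $u_1,u_2\in\mathcal{N}_{NP}(\phi)$ both solve \eqref{NPMAE}. Then $P[u_1]=P[u_2]=\phi$, and one applies the comparison principle in this class to the pair: on $\{u_1<u_2\}$ one has $\NP{u_2}\le\NP{u_1}$ combined with $F(u_1,\cdot)\le F(u_2,\cdot)$ — the standard Cegrell argument (integrating $\NP{\cdot}$ over the contact set, using that the two functions share the same singularity so boundary/model terms cancel) forces $\mu(\{u_1<u_2\})=0$, hence $u_1\ge u_2$ a.e.-$\mu$ and then everywhere by the non-pluripolarity of $\mu$ and plurisubharmonicity; by symmetry $u_1=u_2$. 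One subtlety is that strict monotonicity of $t\mapsto F(t,z)$ is not assumed, so to close the uniqueness argument I would use the sharper version: if $\NP{u_2}\le\NP{u_1}$ on $\{u_1<u_2\}$ and both equal $F(u_i,\cdot)\,d\mu$ with $F$ nondecreasing, then on that set $F(u_2,\cdot)\le F(u_1,\cdot)\le F(u_2,\cdot)$, so actually $\NP{u_1}=\NP{u_2}$ there, and the rigidity part of the comparison principle (uniqueness of solutions to the \emph{linear} problem with given model potential) gives $u_1=u_2$ on $\{u_1<u_2\}$, a contradiction unless that set is empty.

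I expect the main obstacle to be two intertwined technical points: (i) verifying that the iterates stay trapped in $\mathcal{N}_{NP}(\phi)$ — i.e., that solving the linear problem with model potential $\phi$ really does produce a solution with $P[\cdot]=\phi$ and not a strictly smaller model potential, which requires the full strength of the model-potential formalism and the stability of $P[\cdot]$ under the operations used; and (ii) the limiting step $\NP{u_j}\to\NP{u}$, since non-pluripolar Monge–Ampère measures are notoriously discontinuous and one must exploit that the $u_j$ form a monotone sequence with uniformly controlled non-pluripolar mass (dominated by $\NP{\psi}$) lying in a fixed Cegrell-type energy class $\mathcal{N}_{NP}(\phi)$, where the relevant convergence theorem applies. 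Everything else — the comparison principles, the continuity of $F$ in $t$, the $L^1_{loc}(\mu)$ integrability — feeds routinely into this skeleton.
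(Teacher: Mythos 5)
There is a genuine gap at the heart of your existence argument: the operator $T$ is order-\emph{reversing}, not order-preserving. By the comparison principle in this setting (the paper's Corollary \ref{cor: comparison wrt NPMA 2}, equivalently \cite[Corollary 5.7]{DDLP}), if $\NP{w_1}\leq \NP{w_2}$ on $\{w_1<w_2\}$ then $w_1\geq w_2$; i.e.\ a \emph{larger} right-hand side measure produces a \emph{smaller} solution. Hence $w_1\leq w_2$ gives $F(w_1,\cdot)\leq F(w_2,\cdot)$, so $\NP{T(w_1)}\leq\NP{T(w_2)}$ and therefore $T(w_1)\geq T(w_2)$ — the opposite of what you claim. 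Consequently the iteration $u_{j+1}=T(u_j)$ with $u_0=\phi$ does not decrease: $u_1\leq u_0$ forces $u_2=T(u_1)\geq T(u_0)=u_1$, and in general the even and odd iterates form two interlaced monotone subsequences converging to limits $\underline{u}\leq\overline{u}$ that satisfy only the coupled system $T(\underline{u})=\overline{u}$, $T(\overline{u})=\underline{u}$. Since $F$ is merely continuous and nondecreasing in $t$ (no Lipschitz or contraction hypothesis), nothing lets you conclude $\underline{u}=\overline{u}$, so the monotone-iteration scheme does not yield a fixed point. This is exactly why the paper abandons iteration and instead applies the Schauder--Tychonoff fixed point theorem: it builds the compact convex set $\mathcal{A}=\{u\in\NNP(\phi):u\geq u_0\}$ (with $u_0$ the solution for the datum $F(\phi,\cdot)d\mu$, and convexity coming from Lemma \ref{NNP is convex}), shows $T(\mathcal{A})\subset\mathcal{A}$, and proves continuity of $T$ via the stability Lemma \ref{stability}; no monotonicity of $T$ is needed.

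Two secondary points. First, your proposal never addresses the split between $F(t,\cdot)\in L^1(\Om,\mu)$ and $L^1_{loc}(\Om,\mu)$: the stability/continuity step (dominated convergence against $\NP{v_0}$ with finite mass) genuinely requires the $L^1$ hypothesis, and the paper handles the general case by exhausting $\Om$ with the plurifine open sets $\Om_j=\{d(\cdot,\partial\Om)>2^{-j},\ \phi+\psi>-2^{-j}\}$, solving for $\mu_j=\mathbb{1}_{\Om_j}\mu$, and showing the resulting solutions decrease (again by Corollary \ref{cor: comparison wrt NPMA 2}) to a solution on $\Om$; any corrected argument must deal with this. Second, your uniqueness paragraph, though garbled in the direction of the inequalities, does reduce to the correct mechanism: on $\{u_1<u_2\}$ one has $\NP{u_1}=F(u_1,\cdot)d\mu\leq F(u_2,\cdot)d\mu=\NP{u_2}$, and Corollary \ref{cor: comparison wrt NPMA 2} forces $u_1\geq u_2$; by symmetry $u_1=u_2$. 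So the uniqueness part survives, but the existence core needs to be replaced by a genuine fixed-point (or variational) argument rather than monotone iteration.
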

\noindent Our result may be interpreted as a non-pluripolar Monge-Amp\`ere counterpart to the main theorem established by Benelkourchi in \cite{Ben14}, which was developed in the context of Cegrell’s classes. Our proof of the main theorem is divided into two parts, depending on whether \(F\) belongs to \(L^1\) or to \(L^1_{\text{loc}}\). When \(F \in L^1\), we follow an idea from Benelkourchi’s method: we construct a convex, compact, and bounded set \(\mathcal{A} \subset L^1\), and define a continuous operator \(T\) on \(\mathcal{A}\) so that Schauder’s fixed point theorem applies. In the case \(F \in L^1_{\text{loc}}\), we consider an appropriate exhaustion of \(\Omega\) by an increasing sequence of subdomains and solve the problem on each subdomain. The solution on \(\Omega\) is then obtained as the limit of the solutions on these subdomains.
\section{Non-pluripolar complex Monge-Amp\`ere measures}

First, we recall the definition of the non-pluripolar complex Monge-Amp\`ere measures.
	\begin{definition}\label{def NPMA}\cite{BT87}
		If $u\in\psh$ then the non-pluripolar complex Monge-Amp\`ere measure of $u$ is the measure $\NP{u}$ satisfying 
		$$\int\limits_E\NP{u}=\lim\limits_{j\rightarrow\infty}\int\limits_{E\cap\{u>-j\}}\left(\ddc \max\{u,-j\} \right)^n,$$
		for every Borel set $E\subset\Om.$
	\end{definition}
We recall and prove several results concerning the non-pluripolar Monge-Amp\`ere measure, which will be used in sequel.
	
	\begin{remark}\label{remk NPMA}
		\begin{itemize}
			\item[i.]If $E\subset\{u>-k\},$ then it follows from \cite[Corollary 4.3]{BT87} that $$\int\limits_E(\ddc\max\{u, -j\})^n=\int\limits_E(\ddc\max\{u,-k\})^n, \text{ for every }j\geq k.$$
			In particular, 
			$$\int_{E\cap\{u>-k\}}\NP{u}=\int\limits_{E\cap\{u>-k\}}(\ddc\max\{u,-k\})^n,$$
			for every $k>0$ and for every Borel set $E\subset\Om$.
			\item[ii.] $\NP{u}$ vanishes on every pluripolar sets.
		\end{itemize}
	\end{remark}

\begin{lemma}\label{NPMA max}
    Let $u, v \in \pshn$, then 
    $$\NP{\max(u, v)} \geq \mathbb{1}_{\{u \geq v\}}\NP{u}+\mathbb{1}_{\{u < v\}}\NP{v}.$$
\end{lemma}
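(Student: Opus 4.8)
The plan is to deduce the statement from the classical local inequality for locally bounded psh functions by passing to the limit in the truncations that define $\NP{\cdot}$, using Remark \ref{remk NPMA} to transport the relevant identities across the truncations.

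First I would recall the well-known Bedford--Taylor inequality (see \cite{BT87}): if $\phi,\psi\in\psh\cap L^\infty_{loc}(\Om)$ then
$$(\ddc\max(\phi,\psi))^n \geq \mathbb{1}_{\{\phi\geq\psi\}}(\ddc\phi)^n+\mathbb{1}_{\{\phi<\psi\}}(\ddc\psi)^n.$$
On the plurifine-open sets $\{\phi>\psi\}$ and $\{\psi>\phi\}$ this is an equality by plurifine locality of the Monge-Amp\`ere operator, while the remaining contribution on $\{\phi=\psi\}$ follows from the symmetric roles of $\phi$ and $\psi$; I would take this as known rather than reprove it.

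Next, set $w=\max(u,v)$ and, for $M>0$, introduce $u_M=\max(u,-M)$, $v_M=\max(v,-M)$, $w_M=\max(w,-M)$; these are bounded psh functions and $w_M=\max(u_M,v_M)$ by associativity of the maximum. Fix $k>0$ and let $E\subset\{u>-k\}\cap\{v>-k\}$ be Borel. For every $M\geq k$ one has $u_M=u$, $v_M=v$, $w_M=w$ on $E$, whence $E\cap\{u_M\geq v_M\}=E\cap\{u\geq v\}$ and $E\cap\{u_M<v_M\}=E\cap\{u<v\}$. Applying the bounded inequality to $u_M,v_M$ and integrating over $E$ gives
$$\int_E(\ddc w_M)^n \geq \int_{E\cap\{u\geq v\}}(\ddc u_M)^n+\int_{E\cap\{u<v\}}(\ddc v_M)^n.$$
Since $E\subset\{w>-k\}$, $E\cap\{u\geq v\}\subset\{u>-k\}$ and $E\cap\{u<v\}\subset\{v>-k\}$, Remark \ref{remk NPMA}(i) lets me rewrite each of the three integrals (for $M\geq k$) as the corresponding integral against $\NP{w}$, $\NP{u}$, $\NP{v}$. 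This proves $\int_E\NP{w}\geq\int_{E\cap\{u\geq v\}}\NP{u}+\int_{E\cap\{u<v\}}\NP{v}$ for all such $E$.

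Finally, for an arbitrary Borel set $E\subset\Om$ I would write $E_k=E\cap\{u>-k\}\cap\{v>-k\}$, which increase to $E$ minus the pluripolar set $\{u=-\infty\}\cup\{v=-\infty\}$; by Remark \ref{remk NPMA}(ii) this exceptional set is null for each of $\NP{w},\NP{u},\NP{v}$, so applying monotone convergence to the inequality already obtained on each $E_k$ yields the claim on $E$, hence the desired inequality of measures. I expect the only genuinely delicate point to be the bookkeeping around the coincidence set $\{u=v\}$: one must invoke the bounded-case inequality in precisely the form that assigns $(\ddc\phi)^n$ to $\{\phi\geq\psi\}$, which is what makes the final statement asymmetric in $u$ and $v$; the rest is routine truncation bookkeeping.
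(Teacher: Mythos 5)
Your proof is correct and follows essentially the same route as the paper's: both reduce to the classical inequality for bounded psh functions (Demailly/Bedford--Taylor), use Remark \ref{remk NPMA} to identify the truncated Monge-Amp\`ere integrals with the non-pluripolar measures on sets where the functions exceed the truncation level, and exhaust a general Borel set modulo the pluripolar exceptional set. The only difference is cosmetic: you exhaust by $E\cap\{u>-k\}\cap\{v>-k\}$ whereas the paper uses $E\cap\{u+v>-j\}$, which is contained in the former.
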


\begin{proof}
			Since non-pluripolar complex Monge-Amp\`ere measure does not charge the set $\{ u + v = -\infty\}$, it suffices to prove that
				$$\int\limits_{E} \NP{\max \{ u,v\}} \geq \int\limits_{E \cap \{u \geq v\}} \NP{u}+\int\limits_{E \cap \{u < v\}} \NP{v},$$
			 for every Borel set $E \subset \{ u+ v >-\infty\}$. Observe that $E = \bigcup\limits_{j \geq 1} E_j$ where $E_j = E \cap \{ u + v >-j\}$. We aim to show that
			\[
			\int\limits_{E_{j_0}} \NP{\max \{ u,v\}} \geq \int\limits_{E_{j_0} \cap \{u \geq v\}} \NP{u}+\int\limits_{E_{j_0} \cap \{u < v\}} \NP{v},
			\]
			for every $j_0\geq 1$.
			
			Since $\max \{u,v\} \geq \min\{u, v\}\geq u+v$,  it follows that
			$$E_{j_0}\subset\{u+v > -j_0\}\subset \{u>-j\}\cap\{v>-j\} \subset \{\max \{u,v\} > -j \},$$ 
			for every $j > j_0$.  Hence, by Definition \ref{def NPMA} and Remark \ref{remk NPMA} (i), we have
		\begin{equation}\label{eq1lemNPmax}
			\int\limits_{E_{j_0}} \NP{w} = \int\limits_{E_{j_0} } \Big(\ddc \max\{ w, -j \} \Big)^n,
		\end{equation}
			for $w\in\{u, v, \max\{u, v\}\}$ and for every $j>j_0$.
			
			Denote $u_j=\max\{u, -j\}$, $v_j=\max\{v, -j\}$ and $\phi_j=\max\{\max\{u, v\}, -j\}$.
		Observe that $\phi_j = \max \{u_j, v_j\}$. By applying 
			\cite[Proposition 11.9]{Dem89}
			 (see also \cite[Proposition 4.3]{NP09}), we have
			 \begin{equation}\label{eq1.1lemNPmax}
			 (\ddc\phi_j)^n\geq \mathbb{1}_{\{u_j\geq v_j\}}(\ddc u_j)^n+\mathbb{1}_{\{u_j< v_j\}}(\ddc v_j)^n.
			 \end{equation}
			Note $E_{j_0}\cap \{u_j\geq v_j\}=E_{j_0}\cap \{u\geq v\}$ and  $E_{j_0}\cap \{u_j<v_j\}=E_{j_0}\cap \{u< v\}$ which hold for every
			$j>j_0$. Hence, it follows from \eqref{eq1.1lemNPmax} that
			 \begin{equation}\label{eq2lemNPmax}
			 \int\limits_{E_{j_0} } (\ddc\phi_j)^n 
			 \geq \int\limits_{E_{j_0}\cap \{u\geq v\}}(\ddc u_j)^n 
			 +  \int\limits_{E_{j_0}\cap \{u<v\}} (\ddc v_j)^n,
			 \end{equation}
			for every $j>j_0$.
			
			Combining \eqref{eq1lemNPmax} and \eqref{eq2lemNPmax}, we get 
			$$\int\limits_{E_{j_0}} \NP{\max\{u, v\}}\geq
			 \int\limits_{E_{j_0}\cap \{u\geq v\}} \NP{u}+\int\limits_{E_{j_0}\cap \{u<v\}} \NP{v},$$
		which establishes the desired result.
		\end{proof}



\begin{lemma}\label{NNP is convex}
    Let $u, v \in \NNP$ and $t \in (0, 1)$. Then $tu  + (1-t)v \in \NNP$.
\end{lemma}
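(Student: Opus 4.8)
The plan is to observe first that $w:=tu+(1-t)v$ is a convex combination, with positive weights, of functions in $\pshn$, hence $w\in\pshn$; so the whole point is to prove $P[w]=0$. Since $P[w]$ is, by definition, the upper regularization of a supremum of functions in $\pshn$, it is automatically $\le 0$, and it remains only to show $P[w]\ge 0$.

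For $h\in\pshn$ write $\mathcal{G}_h$ for the family of competitors defining $P[h]$, that is
$$\mathcal{G}_h=\Big\{g\in\pshn:\ g\le h+O(1)\text{ on }\Om,\ \liminf_{\Om\setminus\{h=-\infty\}\ni z\to\xi_0}\big(h(z)-g(z)\big)\ge 0\ \ \forall\,\xi_0\in\partial\Om\Big\},$$
so that $P[h]=\big(\sup_{g\in\mathcal{G}_h}g\big)^*$; note $h\in\mathcal{G}_h$, so $\mathcal{G}_h\neq\emptyset$. The key step is: if $g_1\in\mathcal{G}_u$ and $g_2\in\mathcal{G}_v$, then $tg_1+(1-t)g_2\in\mathcal{G}_w$. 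Indeed $tg_1+(1-t)g_2\in\pshn$, and $tg_1+(1-t)g_2\le t(u+O(1))+(1-t)(v+O(1))=w+O(1)$. For the boundary condition, observe that $\{w=-\infty\}=\{u=-\infty\}\cup\{v=-\infty\}$, so $\Om\setminus\{w=-\infty\}$ is contained in each of $\Om\setminus\{u=-\infty\}$ and $\Om\setminus\{v=-\infty\}$; since a $\liminf$ taken along a smaller set of points is $\ge$ the one taken along a larger set, both $\liminf_{\Om\setminus\{w=-\infty\}\ni z\to\xi_0}(u-g_1)$ and $\liminf_{\Om\setminus\{w=-\infty\}\ni z\to\xi_0}(v-g_2)$ are $\ge 0$, and superadditivity of $\liminf$ gives
$$\liminf_{\Om\setminus\{w=-\infty\}\ni z\to\xi_0}\big(w(z)-tg_1(z)-(1-t)g_2(z)\big)\ \ge\ t\cdot 0+(1-t)\cdot 0\ =\ 0$$
for every $\xi_0\in\partial\Om$.

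Consequently $tg_1+(1-t)g_2\le P[w]$ for all $g_1\in\mathcal{G}_u$, $g_2\in\mathcal{G}_v$; taking suprema over $g_1$ and $g_2$ separately (allowed because $t>0$ and $1-t>0$) yields $t\sup_{\mathcal{G}_u}g_1+(1-t)\sup_{\mathcal{G}_v}g_2\le P[w]$. Applying the upper regularization to both sides, using that $P[w]$ is already upper semicontinuous, that upper regularization is monotone, and the identity
$$\Big(t\sup_{\mathcal{G}_u}g_1+(1-t)\sup_{\mathcal{G}_v}g_2\Big)^*=t\big(\sup_{\mathcal{G}_u}g_1\big)^*+(1-t)\big(\sup_{\mathcal{G}_v}g_2\big)^*=t\,P[u]+(1-t)\,P[v]=0$$
(the first equality because a supremum of psh functions agrees with its upper regularization off a pluripolar set, and $u,v\in\NNP$ give $P[u]=P[v]=0$), we obtain $P[w]\ge 0$. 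Hence $P[w]=0$, i.e. $w\in\NNP$.

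The main obstacle I anticipate is purely bookkeeping: tracking the three distinct polar sets $\{u=-\infty\}$, $\{v=-\infty\}$, $\{w=-\infty\}$ when verifying the boundary condition, and justifying the regularization identity in the last display, which rests on the standard fact that the negligible set where a supremum of psh functions differs from its upper regularization is pluripolar.
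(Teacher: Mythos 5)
Your proof is correct and follows essentially the same route as the paper: convex combinations of competitors for $P[u]$ and $P[v]$ are competitors for $P[tu+(1-t)v]$, which gives $P[tu+(1-t)v]\ge tP[u]+(1-t)P[v]=0$, while $P[tu+(1-t)v]\le 0$ is automatic. The only difference is cosmetic: the paper extracts approximating sequences from the two envelopes, whereas you work with the full suprema and pass to upper regularizations using the standard fact that a regularized supremum of psh functions differs from the supremum only on a negligible (Lebesgue-null) set.
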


\begin{proof} We define the singular sets $N_u=\{u=-\infty\}$, $N_v=\{v=-\infty\}$, and set
$$U=\{w\in\pshn:\ w\leq u+ O(1) \text{ on }\Omega,\ \liminf\limits_{\Omega\setminus N_u\ni z\rightarrow\xi_0}(u(z) -w(z))\geq0\ \forall\xi_0\in\partial\Omega \}$$
and
$$V=\{w\in\pshn:\ w\leq v+ O(1) \text{ on }\Omega,\ \liminf\limits_{\Omega\setminus N_v\ni z\rightarrow\xi_0}(v(z) -w(z))\geq0\ \forall\xi_0\in\partial\Omega \}.$$
    By the definition of $P[u]$ and $P[v]$, there exist sequences $\{u_j\}_j \subset U$ and $\{v_k\}_k \subset V$ such that 
    $$\lim\limits_{j \to +\infty}u_j=P[u] \quad\text{ and } \quad\lim\limits_{k \to +\infty}v_k= P[v].$$ 
    Since $u_j \leq u + O(1)$ and $v_k \leq v + O(1)$ on $\Omega$, then for all $t \in (0, 1)$, we have
    \begin{equation}\label{tuj+(1-t)vk<tu+(1-t)v+O}
        tu_j + (1 - t)v_k \leq tu + (1 - t)v + O(1) \quad \text{on } \Omega.
    \end{equation}
    It is clear that 
    $$\{tu+(1-t)v\ \neq -\infty\} \subset \{u\neq -\infty\}\cap\{v \neq -\infty\},$$ 
    for every $t\in (0, 1)$. Thus,
    \begin{align}\label{condition liminf}
        \liminf\limits_{\Omega\setminus N\ni z\rightarrow\xi_0}&(tu+(1-t)v-tu_j-(1-t)v_k)\nonumber\\ 
        &\geq  t\liminf\limits_{\Omega\setminus N\ni z\rightarrow\xi_0}(u-u_j)+(1-t)\liminf\limits_{\Omega\setminus N\ni z\rightarrow\xi_0}(v-v_k)\nonumber\\
        &\geq t\liminf\limits_{\Omega\setminus N_u\ni z\rightarrow\xi_0}(u-u_j)+(1-t)\liminf\limits_{\Omega\setminus N_v\ni z\rightarrow\xi_0}(v-v_k)\nonumber\\
        &\geq 0,
    \end{align}
    where $N=\{tu+(1-t)v=-\infty\}$ and $\xi_0\in\partial\Omega$.
    It follows from inequalities \eqref{tuj+(1-t)vk<tu+(1-t)v+O} and \eqref{condition liminf} that $tu_j+(1-t)v_k$ belongs to the set
    $$\{w \in \pshn: \ w\leq tu+(1-t)v+ O(1) \text{ on }\Omega,\ \liminf\limits_{\Omega\setminus N\ni z\rightarrow\xi_0}(tu+(1-t)v -w)\geq0\ \forall\xi_0\in\partial\Omega \}.$$
    Hence,
    $$P[tu+(1-tv)] \geq tu_j+(1-t)v_k \text{ on } \Om.$$
    Letting $j, k \to +\infty$, we obtain
    $$P[tu+(1-tv)] \geq tP[u]+(1-t)P[v]=0.$$
    However, the definition implies that $P[tu + (1 - t)v] \leq 0$. Thus, we conclude that
    \[
    P[tu + (1 - t)v] = 0,
    \]
    which means  $tu + (1 - t)v \in \NNP$, as desired.
\end{proof}

The following corollary follows directly from the proof of the preceding lemma.
\begin{lemma}
    If $u, v \in \pshn$, then $P[u+v] \geq P[u]+P[v]$.
\end{lemma}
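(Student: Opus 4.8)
The plan is to mimic, almost verbatim, the argument in the proof of Lemma \ref{NNP is convex}, simply taking the coefficients to be $1$ and $1$ instead of $t$ and $1-t$. First I would record that $u+v\in\pshn$, so $P[u+v]$ is meaningful, and I would set $N_u=\{u=-\infty\}$, $N_v=\{v=-\infty\}$, $N=\{u+v=-\infty\}$, together with the defining families
$$U=\{w\in\pshn:\ w\leq u+O(1)\text{ on }\Omega,\ \liminf\limits_{\Omega\setminus N_u\ni z\rightarrow\xi_0}(u(z)-w(z))\geq0\ \forall\xi_0\in\partial\Omega\}$$
and the analogous $V$ for $v$, exactly as before. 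By definition of $P[u]$ and $P[v]$ one may pick sequences $\{u_j\}_j\subset U$ and $\{v_k\}_k\subset V$ with $\lim_j u_j=P[u]$ and $\lim_k v_k=P[v]$.

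Next I would verify that each $u_j+v_k\in\pshn$ lies in the family defining $P[u+v]$. The inequality $u_j+v_k\leq u+v+O(1)$ on $\Omega$ is immediate from $u_j\leq u+O(1)$ and $v_k\leq v+O(1)$. For the boundary condition, the key observation is that $N=N_u\cup N_v$, hence $\Omega\setminus N\subset\Omega\setminus N_u$ and $\Omega\setminus N\subset\Omega\setminus N_v$; since a $\liminf$ taken over a smaller approach set is at least as large, and since $\liminf$ is superadditive (both differences $u-u_j$ and $v-v_k$ are bounded below because $u_j,v_k$ are $O(1)$-dominated), one obtains, for every $\xi_0\in\partial\Omega$,
$$\liminf\limits_{\Omega\setminus N\ni z\rightarrow\xi_0}\big((u+v)-(u_j+v_k)\big)\ \geq\ \liminf\limits_{\Omega\setminus N_u\ni z\rightarrow\xi_0}(u-u_j)+\liminf\limits_{\Omega\setminus N_v\ni z\rightarrow\xi_0}(v-v_k)\ \geq\ 0.$$
Therefore $u_j+v_k$ belongs to $\{w\in\pshn:\ w\leq u+v+O(1)\text{ on }\Omega,\ \liminf_{\Omega\setminus N\ni z\rightarrow\xi_0}((u+v)-w)\geq0\ \forall\xi_0\in\partial\Omega\}$, so that $P[u+v]\geq u_j+v_k$ on $\Omega$.

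Finally, letting $j,k\to+\infty$ yields $P[u+v]\geq P[u]+P[v]$, which is the claim. I do not expect any real obstacle: the only two points needing a line of care are the set inclusion $N\supset N_u,\,N_v$ (so that the $\liminf$ comparison goes in the favourable direction) and the superadditivity of $\liminf$ in the absence of an $\infty-\infty$ indeterminacy, both of which are handled precisely as in Lemma \ref{NNP is convex}. This is exactly why the statement is phrased as a corollary extracted from that proof rather than proved independently.
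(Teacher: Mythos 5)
Your proposal is correct and coincides with the paper's intended argument: the paper explicitly derives this lemma from the proof of Lemma \ref{NNP is convex}, and your proof is exactly that argument rerun with coefficients $1$ and $1$ in place of $t$ and $1-t$, including the two points that need care (the inclusion $N=N_u\cup N_v$ and the superadditivity of $\liminf$ in the absence of an $\infty-\infty$ indeterminacy).
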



This section concludes with a corollary of \cite[Theorem 5.6]{DDLP}, which can be regarded as a stronger version of  \cite[Corollary 5.7]{DDLP}.
\begin{corollary}\label{cor: comparison wrt NPMA 2}
    Let $H\in\pshn$ and $u,v\in\NNP(H)$. Assume that $\NP{u}\leq\NP{v}$ on the set $\{u < v\}$. Then $u\geq v.$ 
\end{corollary}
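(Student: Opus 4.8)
The plan is to reduce the statement to the non-localised comparison principle \cite[Theorem 5.6]{DDLP} — which I will use in the form: if $a,b\in\NNP(H)$ and $\NP{a}\ge\NP{b}$ on $\Omega$, then $a\le b$ — by replacing $v$ with $w:=\max\{u,v\}$. The point of passing to a maximum is that $w\ge u$ everywhere, so once I know $w\le u$ I get $u\ge v$, and for that I only need the \emph{global} inequality $\NP{w}\ge\NP{u}$; the localised hypothesis will be exactly strong enough to supply it.

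First I would check that $w\in\NNP(H)$. Unravelling the definition of $\NNP(H)$, choose $u_0,v_0\in\NNP$ with $u_0+H\le u\le H$ and $v_0+H\le v\le H$. Then $w\le H$, and since $\max\{a,b\}\ge\tfrac12(a+b)$,
\[
w \;=\; \max\{u,v\} \;\ge\; \tfrac12(u+v) \;\ge\; \tfrac12(u_0+v_0)+H ,
\]
where $\tfrac12(u_0+v_0)\in\NNP$ by Lemma \ref{NNP is convex}; hence $w\in\NNP(H)$. Next I would show $\NP{w}\ge\NP{u}$ on $\Omega$: by Lemma \ref{NPMA max},
\[
\NP{w}\;\ge\;\mathbb{1}_{\{u\ge v\}}\NP{u}+\mathbb{1}_{\{u<v\}}\NP{v},
\]
and on $\{u<v\}$ the hypothesis gives $\NP{v}\ge\NP{u}$, so (using $\{u\ge v\}\cup\{u<v\}=\Omega$ and the fact that $\NP{u}$ charges no pluripolar set, Remark \ref{remk NPMA}) the right-hand side is at least $\mathbb{1}_{\{u\ge v\}}\NP{u}+\mathbb{1}_{\{u<v\}}\NP{u}=\NP{u}$.

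Finally I would apply \cite[Theorem 5.6]{DDLP} (equivalently \cite[Corollary 5.7]{DDLP}) to $w$ and $u$: both lie in $\NNP(H)$ and $\NP{w}\ge\NP{u}$ on $\Omega$, so $w\le u$, i.e.\ $\max\{u,v\}\le u$, which is precisely $u\ge v$. The only step that carries any content is the membership $w\in\NNP(H)$, and even that collapses — as above — to convexity of $\NNP$ (Lemma \ref{NNP is convex}) together with the elementary bound $\max\{u,v\}\ge\tfrac12(u+v)$; the remaining manipulations are one line of measure arithmetic, so I do not expect a real obstacle here.
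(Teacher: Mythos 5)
Your argument is correct, but it follows a genuinely different route from the paper. The paper deduces the corollary in one stroke from the quantitative localized estimate of \cite[Theorem 5.6]{DDLP}: with $T=dd^cw_1\wedge\dots\wedge dd^cw_n$, $w_j\in \text{PSH}(\Omega,[-1,0])$, the hypothesis $\NP{u}\leq\NP{v}$ on $\{u<v\}$ forces $\frac{1}{n!}\int_{\{u<v\}}(v-u)^nT\leq 0$, whence $u\geq v$ almost everywhere and therefore everywhere. You instead reduce the localized statement to the global comparison principle (\cite[Corollary 5.7]{DDLP}, i.e.\ the same statement with the measure inequality assumed on all of $\Omega$) by the classical maximum trick: $w=\max\{u,v\}$ lies in $\NNP(H)$, Lemma \ref{NPMA max} together with the hypothesis gives $\NP{w}\geq\NP{u}$ on $\Omega$, and the global comparison yields $w\leq u$, i.e.\ $u\geq v$. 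This is softer (no energy inequality is needed, only the global comparison, which is exactly what the present corollary is announced to strengthen) and it gives Lemma \ref{NPMA max} — stated and proved in Section 2 but not otherwise used — an actual role; the paper's proof is shorter but leans on the stronger Theorem 5.6. Two small simplifications to your write-up: the membership $w\in\NNP(H)$ is immediate from $u_0+H\leq u\leq \max\{u,v\}\leq H$ with $u_0\in\NNP$, so Lemma \ref{NNP is convex} and the bound $\max\{u,v\}\geq\tfrac12(u+v)$ are not needed; and the appeal to Remark \ref{remk NPMA} is superfluous, since $\{u\geq v\}\cup\{u<v\}=\Omega$ exactly, so $\mathbb{1}_{\{u\geq v\}}\NP{u}+\mathbb{1}_{\{u<v\}}\NP{u}=\NP{u}$ with no exceptional set to discard.
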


\begin{proof}
    Let $w_j\in\text{PSH}(\Omega,[-1, 0]),$ $j=1,...,n,$ and denote $T=dd^cw_1\wedge...\wedge dd^c w_n$. Then, by  \cite[Theorem 5.6]{DDLP} and the assumption $\NP{u}\leq\NP{v}$ we known that
    $$\frac{1}{n!}\int\limits_{\{u<v\}}(v-u)^n T\leq 0.$$
    Hence, $u \geq v$ almost everywhere in $\Om$, which means $u \geq v$ everywhere in $\Om$, as desired.
\end{proof}

\section{Proof of the main result}
For the reader's convenience, we recall the main result from \cite{DDLP}, which plays a crucial role in the proof of our main result.
\begin{theorem}\label{MA w prescribed singularity}\cite[Theorem 1.2]{DDLP} Let $\Om$ be a bounded domain, $\mu$  is a non-pluripolar positive Borel measure on $\Om$ and $\phi$  is a model psh
 function on $\Om$.
	Assume that there exists $v\in\pshn$ such that $\NP{v}\geq\mu$ and $P[v]=\phi$. Denote 
	$$S=\{w\in\pshn: w\leq  \phi, \NP{w}\geq\mu\}.$$
	Then $u_S:=(\sup\{w: w\in S\})^*$ is a solution of the problem 
		\begin{equation}\label{NPMA1}
		\begin{cases}
		\NP{u}=\mu,\\
		P[u]=\phi,
		\end{cases}
		\end{equation}
	 Moreover, if there exists $\psi\in\NNP$ such that 
	$\NP{\psi}\geq\mu$ then $u_S$ is the unique solution of \eqref{NPMA1}.
\end{theorem}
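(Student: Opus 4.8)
The plan is to realize $u_S$ as the maximal subsolution in $S$ and to verify the four requirements in turn: that $S$ is nonempty and stable enough for $u_S$ to be well defined, that the singularity condition $P[u_S]=\phi$ holds, that the two inequalities $\NP{u_S}\geq\mu$ and $\NP{u_S}\leq\mu$ hold, and finally uniqueness. First I would record the elementary facts. Since $P[v]=\phi$ and $v$ is itself a competitor in the envelope defining $P[v]$, we have $v\leq P[v]=\phi$; together with $\NP{v}\geq\mu$ this shows $v\in S$, so $S\neq\emptyset$. The family $S$ is stable under $\max$: if $w_1,w_2\in S$ then $\max\{w_1,w_2\}\leq\phi$, and Lemma~\ref{NPMA max} together with $\NP{w_i}\geq\mu$ gives $\NP{\max\{w_1,w_2\}}\geq\mathbb{1}_{\{w_1\geq w_2\}}\mu+\mathbb{1}_{\{w_1<w_2\}}\mu=\mu$, so $\max\{w_1,w_2\}\in S$. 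As every $w\in S$ satisfies $w\leq\phi\leq 0$, the envelope $u_S$ is a well-defined negative psh function with $v\leq u_S\leq\phi$. The singularity condition then follows from monotonicity of $P[\cdot]$ with respect to $\leq$ (a direct consequence of the inclusion of the defining families): from $v\leq u_S\leq\phi$ I get $\phi=P[v]\leq P[u_S]\leq P[\phi]=\phi$, the last equality using that $\phi$ is model, whence $P[u_S]=\phi$.

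To prove $\NP{u_S}\geq\mu$ I would invoke Choquet's lemma: using the stability under $\max$, choose an increasing sequence $w_j\in S$ with $(\sup_j w_j)^*=u_S$. For each fixed truncation level $M$ the bounded functions $\max\{w_j,-M\}$ increase to $\max\{u_S,-M\}$, so by the Bedford--Taylor convergence theorem $(\ddc\max\{w_j,-M\})^n\to(\ddc\max\{u_S,-M\})^n$ weakly; restricting to the plurifine-open set $\{u_S>-M\}$ and using the truncation identity of Remark~\ref{remk NPMA}(i) together with $\NP{w_j}\geq\mu$, I would pass to the limit and then let $M\to\infty$ to conclude $\NP{u_S}\geq\mu$ on $\Omega$. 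In particular $u_S\in S$, so $u_S$ is the largest element of $S$.

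The main obstacle is the reverse inequality $\NP{u_S}\leq\mu$, which I would obtain by a local balayage argument combined with the maximality of $u_S$. Fix a ball $B\Subset\Omega$. On $B$ I would solve the obstacle problem producing $U_B\in\psh$ with $U_B=u_S$ on $\partial B$, $U_B\leq\phi$, $U_B\geq u_S$, and $\NP{U_B}=\mathbb{1}_{\{U_B<\phi\}}\mu$ on $B$, where $U_B\geq u_S$ follows from the comparison principle because $\NP{U_B}\leq\mu\leq\NP{u_S}$ on $B$. Gluing $\hat u:=U_B$ on $B$ and $\hat u:=u_S$ off $B$ gives a negative psh function with $\hat u\leq\phi$ and $\NP{\hat u}\geq\mu$, hence $\hat u\in S$; by maximality $\hat u\leq u_S$, which forces $U_B=u_S$ on $B$ and therefore $\NP{u_S}=\mu$ on $B\cap\{u_S<\phi\}$. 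As $B$ is arbitrary this yields $\NP{u_S}=\mu$ on the plurifine-open set $\{u_S<\phi\}$. The delicate point, and the heart of the proof, is the behaviour on the contact set $\{u_S=\phi\}$: one must show that the obstacle constraint is not active, i.e. that balayage cannot push $u_S$ strictly above $\phi$, so that no mass beyond $\mu$ is produced there. This is exactly where the model property $P[\phi]=\phi$ and the comparison machinery of \cite{DDLP} underlying Corollary~\ref{cor: comparison wrt NPMA 2} are essential, and I expect controlling the non-pluripolar mass on $\{u_S=\phi\}$ to demand the most care.

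Finally, for uniqueness under the hypothesis that some $\psi\in\NNP$ satisfies $\NP{\psi}\geq\mu$, I would first use $\psi$ to place every solution in the common class $\NNP(\phi)$: a solution $u'$ of \eqref{NPMA1} satisfies $u'\leq P[u']=\phi$, and comparing $u'$ with $\max\{u',\phi+\psi\}$ through the comparison principle bounds $u'$ below by $\phi+\psi$, so that $u'\in\NNP(\phi)$ with $H=\phi$ and $g=\psi$. I would then apply Corollary~\ref{cor: comparison wrt NPMA 2} to the two solutions $u_S$ and $u'$: on $\{u_S<u'\}$ both measures equal $\mu$, hence $\NP{u_S}\leq\NP{u'}$ there, and the corollary gives $u_S\geq u'$; exchanging the roles of $u_S$ and $u'$ yields $u'\geq u_S$, whence $u'=u_S$.
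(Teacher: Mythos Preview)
The paper does not contain a proof of this theorem. It is stated at the opening of Section~3 with the sentence ``For the reader's convenience, we recall the main result from \cite{DDLP}, which plays a crucial role in the proof of our main result,'' and is simply quoted verbatim as \cite[Theorem~1.2]{DDLP} without any argument. There is therefore no proof in the present paper to compare your proposal against; the authors use this result as a black box in the proof of Theorem~\ref{Thm L1}.

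Regarding the proposal on its own merits: the Perron-envelope strategy is the natural one, and the parts you carry out (nonemptiness and $\max$-stability of $S$, the sandwich $v\le u_S\le\phi$, the identity $P[u_S]=\phi$ by monotonicity, and the lower bound $\NP{u_S}\ge\mu$ via Choquet together with Bedford--Taylor convergence) are reasonable. The reverse inequality $\NP{u_S}\le\mu$, however, is not actually established. Your balayage step presupposes the solvability on each ball $B$ of an obstacle-type non-pluripolar Monge--Amp\`ere problem with upper barrier $\phi$ and measure $\mathbb{1}_{\{U_B<\phi\}}\mu$, a statement of essentially the same strength as the theorem itself; and you explicitly leave open the control of the non-pluripolar mass on the contact set $\{u_S=\phi\}$, acknowledging it as ``the heart of the proof.'' So the decisive step remains a gap. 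For the actual argument one has to consult \cite{DDLP}.
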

The following lemma, which can be considered as a non-pluripolar Monge-Amp\`ere counterpart to \cite[Lemma 3.1]{Ben14}, will be used in the proof of our main result.
\begin{lemma}(Stability)\label{stability}
Let $\Omega$ be a bounded hyperconvex domain in $\C^n$, $\mu $ be a finite nonnegative measure that  vanishes on all pluripolar subsets of $\Om$ and $\phi$ be a model psh  function on $\Om$. Fix a function
 $v_0 \in \NNP(\phi)$ such that $\displaystyle\int\limits_\Om\NP{v_0}<+\infty$. Then for any $u_j ,\ u \in \NNP(\phi)$
solving
$$\begin{cases}
    \NP{u_j}=h_jd\mu,\\
    P[u_j]=\phi
\end{cases} \hspace{0.5cm}
\text{ and } \hspace{0.5cm} \begin{cases}
    \NP{u}=hd\mu,\\
    P[u]=\phi
\end{cases}$$
such that $0\le h d\mu , h_j d\mu\le \NP{v_0} $
 and $h_j$ converges $\mu$-a.e to $h$, we have that $u_j $ converges  to $u$ in $L^1(\Om)$.
\end{lemma}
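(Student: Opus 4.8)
The plan is to squeeze both $u_j$ and $u$ between two auxiliary solutions whose Monge-Amp\`ere data differ exactly by $|h_j-h|\,d\mu$, and then to measure the size of this sandwich through the Xing-type estimate behind Corollary \ref{cor: comparison wrt NPMA 2}. First observe that the hypothesis $v_0\in\NNP(\phi)$ forces $P[v_0]=\phi$: from $v_0\le\phi$ we get $P[v_0]\le P[\phi]=\phi$ (as $\phi$ is model), and from $v_0\ge v+\phi$ with $v\in\NNP$ we get $P[v_0]\ge P[v]+P[\phi]=\phi$ by superadditivity of $P[\cdot]$. Since $0\le h_j\,d\mu\le\NP{v_0}$ and $0\le h\,d\mu\le\NP{v_0}$, both $\max(h_j,h)\,d\mu$ and $\min(h_j,h)\,d\mu$ are non-pluripolar measures dominated by $\NP{v_0}$, hence finite. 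Applying Theorem \ref{MA w prescribed singularity} with $v_0$ as the required majorant (valid since $\NP{v_0}\ge\max(h_j,h)\,d\mu\ge\min(h_j,h)\,d\mu$ and $P[v_0]=\phi$), we obtain functions $\hat u_j,\tilde u_j$ with $\NP{\hat u_j}=\max(h_j,h)\,d\mu$, $\NP{\tilde u_j}=\min(h_j,h)\,d\mu$ and $P[\hat u_j]=P[\tilde u_j]=\phi$; since $v_0$ lies in the corresponding families $S$, we have $v_0\le\hat u_j$ and $v_0\le\tilde u_j$, so $\hat u_j,\tilde u_j\in\NNP(\phi)$, with total non-pluripolar mass at most $\int_\Om\NP{v_0}<+\infty$. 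Uniqueness of $\hat u_j,\tilde u_j$ within $\NNP(\phi)$ is immediate from Corollary \ref{cor: comparison wrt NPMA 2}.

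Next, the sandwiching. Working inside $\NNP(\phi)$ and using Corollary \ref{cor: comparison wrt NPMA 2}: on $\{u_j<\hat u_j\}$ one has $\NP{u_j}=h_j\,d\mu\le\max(h_j,h)\,d\mu=\NP{\hat u_j}$, hence $\hat u_j\le u_j$; on $\{\tilde u_j<u_j\}$ one has $\NP{\tilde u_j}=\min(h_j,h)\,d\mu\le h_j\,d\mu=\NP{u_j}$, hence $u_j\le\tilde u_j$. Running the same comparisons with $h$ and $u$ in place of $h_j$ and $u_j$ gives $\hat u_j\le u\le\tilde u_j$. Therefore
$$0\le |u_j-u|\le\tilde u_j-\hat u_j\quad\text{on }\Om,$$
and, by the same token (comparing $\hat u_j$ with $\tilde u_j$ directly via Corollary \ref{cor: comparison wrt NPMA 2}), $\hat u_j\le\tilde u_j$ with $\NP{\hat u_j}\ge\NP{\tilde u_j}$ globally.

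Now the smallness. Write $\NP{v_0}=f\,d\mu+\sigma$ with $\sigma$ singular with respect to $\mu$; from $h_j\,d\mu,h\,d\mu\le\NP{v_0}$ we get $h_j,h\le f$ $\mu$-a.e. with $f\in L^1(\mu)$, so $h_j\to h$ $\mu$-a.e. together with dominated convergence gives $\int_\Om|h_j-h|\,d\mu\to 0$, i.e. $\int_\Om\NP{\hat u_j}-\int_\Om\NP{\tilde u_j}\to 0$. Fix $R>0$ with $\Om\subset B(0,R)$ and set $w(z)=(|z|^2-R^2)/R^2\in\text{PSH}(\Om,[-1,0])$, so $T:=(\ddc w)^n=\kappa_R\,dV$ for some $\kappa_R>0$. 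Applying the quantitative comparison estimate of \cite[Theorem 5.6]{DDLP} to $\hat u_j\le\tilde u_j$ in $\NNP(\phi)$ — the very inequality used to prove Corollary \ref{cor: comparison wrt NPMA 2} — yields
$$\kappa_R\int_\Om(\tilde u_j-\hat u_j)^n\,dV=\int_\Om(\tilde u_j-\hat u_j)^n\,T\le C\Big(\int_\Om\NP{\hat u_j}-\int_\Om\NP{\tilde u_j}\Big)=C\int_\Om|h_j-h|\,d\mu\longrightarrow 0,$$
with $C=C(n)$. Hence $\int_\Om(\tilde u_j-\hat u_j)^n\,dV\to 0$, and since $\vol(\Om)<+\infty$ and $\tilde u_j-\hat u_j\ge 0$, H\"older's inequality gives $\int_\Om(\tilde u_j-\hat u_j)\,dV\le\vol(\Om)^{1-1/n}\big(\int_\Om(\tilde u_j-\hat u_j)^n\,dV\big)^{1/n}\to 0$. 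Combining with the sandwich bound, $\|u_j-u\|_{L^1(\Om)}\le\int_\Om(\tilde u_j-\hat u_j)\,dV\to 0$, which is the desired conclusion.

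The main obstacle is extracting from \cite[Theorem 5.6]{DDLP} the precise quantitative form just used, namely that for $\hat u_j\le\tilde u_j$ in $\NNP(\phi)$ with finite total non-pluripolar masses and $\NP{\hat u_j}\ge\NP{\tilde u_j}$, the quantity $\int_\Om(\tilde u_j-\hat u_j)^n(\ddc w)^n$ is controlled by $\int_\Om(\NP{\hat u_j}-\NP{\tilde u_j})$ up to a constant depending only on $n$ and $\|w\|_\infty$; one must verify that $\hat u_j,\tilde u_j$ meet the hypotheses of that theorem and that the constant is uniform in $j$. A more routine point is the well-posedness of the auxiliary solutions $\hat u_j,\tilde u_j$: existence comes from Theorem \ref{MA w prescribed singularity} (with majorant $v_0$), and uniqueness within $\NNP(\phi)$ from Corollary \ref{cor: comparison wrt NPMA 2}, so the construction in the first paragraph is legitimate. (That all functions involved lie in $L^1(\Om)$, so that the statement is literally about $L^1(\Om)$-convergence, is part of the hypotheses together with the finite-mass assumption on $v_0$; it is also visible a posteriori since $\tilde u_j-\hat u_j\in L^n(\Om)\subset L^1(\Om)$ dominates $|u_j-u|$.)
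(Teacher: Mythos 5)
Your argument is correct in outline but takes a genuinely different route from the paper's. The paper proves stability softly: from $u_j\ge v_0$ it extracts an $L^1_{loc}$ cluster point $\tilde u$, gets the lower bound $\NP{\tilde u}\ge h\,d\mu$ by applying \cite[Lemmas 4.2 and 3.2]{DDLP} to $(\sup_{k\ge j}u_k)^*$, gets the upper bound by testing \cite[Theorem 5.6]{DDLP} against functions $w\in \text{PSH}(\Omega,[-1,0])$ (using that every test function is a difference of bounded psh functions), and then identifies $\tilde u=u$ by uniqueness; this yields convergence of the whole sequence but no rate. You instead build the barriers $\hat u_j,\tilde u_j$ with densities $\max(h_j,h)$ and $\min(h_j,h)$, sandwich $u_j$ and $u$ between them via Corollary \ref{cor: comparison wrt NPMA 2} (your comparisons are applied in the correct direction), and control $\tilde u_j-\hat u_j$ quantitatively; this buys an explicit rate, $\|u_j-u\|_{L^1(\Om)}\lesssim\big(\int_\Om|h_j-h|\,d\mu\big)^{1/n}$, and genuine $L^1(\Om)$ rather than only $L^1_{loc}$ convergence. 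The auxiliary constructions are legitimate: $P[v_0]=\phi$ follows from monotonicity of $P[\cdot]$ (a competitor $w\le v_0+O(1)$ for $P[v_0]$ is automatically one for $P[\phi]$, and conversely via the paper's superadditivity lemma $P[v+\phi]\ge P[v]+P[\phi]$), and $\max(h_j,h)\,d\mu\le\NP{v_0}$ via the Lebesgue decomposition $\NP{v_0}=f\,d\mu+\sigma$, so Theorem \ref{MA w prescribed singularity} applies with $v_0$ as the majorant and the solutions lie in $\NNP(\phi)$ because $v_0$ belongs to the envelope family. The single load-bearing point, which you rightly flag, is that \cite[Theorem 5.6]{DDLP} must be the quantitative Xing-type inequality, roughly $\frac{1}{n!}\int_{\{u<v\}}(v-u)^n\,dd^cw_1\wedge\cdots\wedge dd^cw_n+\int_{\{u<v\}}(-w)\NP{v}\le\int_{\{u<v\}}(-w)\NP{u}$, and not merely the qualitative comparison recorded in Corollary \ref{cor: comparison wrt NPMA 2}; the present paper's own two uses of that theorem (in the proof of Corollary \ref{cor: comparison wrt NPMA 2}, and in its proof of this very lemma, where it extracts $\int_\Om(-w)\NP{u_j}\ge\int_\Om(-w)\NP{\tilde u}$) are only consistent with such a form, so your step is almost certainly sound, but you should quote the exact statement of \cite[Theorem 5.6]{DDLP} before relying on it and on the constant $n!$.
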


\begin{proof}
It follows from \cite[Corollary 5.7]{DDLP} that
$u_j\ge v_0,\ \forall j\in \N.$ Thus, the sequence $(u_j)_j$ is
 relatively compact in the  $L^1_{loc}-$topology.
Let $\tilde u \in \NNP(\phi) $ be any closter point of the
 sequence $\{u_j\}_j$. Assume that $u_j \to \tilde u$
pointwise almost everywhere with respect to the Lebesgue measure $dV$. Then, by Lemma 2.1 in
\cite{Ce1}, after passing to a subsequence if necessary, we may assume $u_j \to \tilde u $ almost everywhere with respect to $d\mu$.
It follows that
$$
\tilde u = (\limsup_{j\to +\infty}u_j )^* = \lim_{j\to +\infty} (\sup_{k\ge j}u_k)^*.
$$
We set
$$
\tilde u_j= (\sup_{k\ge j}u_k)^* =
(\lim_{l\to +\infty} \sup_{l\ge k \ge j}u_k)^*=(\lim_{l\to +\infty} (\sup_{l\ge k \ge j}u_k)^*)^* =( \lim _{l\to +\infty}
\tilde{u}_j^l)^*,
$$
where, $\tilde{u}_j^l=(\sup\limits_{l\ge k \ge j}u_k)^*$.
Fix $j, l \in \N$ such that $l \geq j$. Then for every $k \in [j, l]$,
$$\NP{u_k}=h_kd\mu \geq \min\limits_{l \geq k \geq j}h_kd\mu.$$
Therefore, applying \cite[Lemma 4.2]{DDLP}, we obtain
$$\NP{\tilde{u}_j^l}\geq \min\limits_{l \geq k \geq j}h_kd\mu, \quad\text{ for each } l>j.$$
Again, using \cite[Lemma 4.2]{DDLP},  we deduce that for each $j \in \N$,
$$\NP{\tilde{u}_j} \geq \lim\limits_{l \to +\infty}\min\limits_{l \geq k \geq j}h_kd\mu.$$
Applying  \cite[Lemma 3.2]{DDLP}, we conclude that
$$\NP{\tilde{u}}\geq \lim\limits_{j \to +\infty} \left( \lim\limits_{l \to +\infty}\min\limits_{l \geq k \geq j}h_kd\mu\right) = hd\mu.$$
To prove the reverse inequality, let $w \in \text{PSH}(\Omega,[-1, 0])$ be any test function. Since 
$u_j \le \tilde u$,  , it follows that for every $\epsilon>0$, the set $\{u_j < \tilde u+\varepsilon\}=\Om$. Hence, applying \cite[Theorem 5.6]{DDLP}, we obtain
$$
\int\limits_{\Om} (-w) \NP{u_j}  \ge \int\limits_{\Om} (-w) \NP{\tilde{u}}.
$$
Therefore
$$
\lim_{j\to +\infty}\int\limits_{\Om} w h_j d\mu
 \leq \int\limits_{\Om}  w\NP{\tilde{u}}.
$$
Combining this with the previous inequality, we conclude that for every test function $w \in \text{PSH}(\Omega,[-1, 0])$,
$$
 \int\limits_{\Om}  w \NP{\tilde{u}} =  \int\limits_{\Om}  w h d\mu.
$$
Since any test function can be expressed as the difference of two bounded psh functions
(cf Lemma 2.1 in \cite{Ceg04}) this equality holds for any test function $w$. Hence
$$
\NP{\tilde{u}} = hd\mu = \NP{u}.
$$
Finally, by \cite[Corollary 5.7]{DDLP}, it follows that $\tilde{u}=u$ which completes the proof.
\end{proof}

\begin{proof}[Proof of Theorem~\ref{Thm L1}]
    We first assume that $F(t, \cdot) \in L^1(\Om, \mu)$, so that $F(\phi, \cdot) \in L^1(\Om, \mu)$ as well.  Since $F(\phi, \cdot)d\mu$ is a non-pluripolar positive Borel measure on $\Om$, it follows from Theorem $\ref{MA w prescribed singularity}$ that the function
    $$u_0=\left( \sup\{w \in \pshn: w \leq \phi, \NP{w} \geq F(\phi, \cdot)\mu\} \right)^*$$
    satisfies
    \begin{equation*}
		\begin{cases}
		\NP{u_0}=F(\phi, \cdot)\mu,\\
		P[u_0]=\phi.
		\end{cases}
    \end{equation*}
    Moreover, Theorem $\ref{MA w prescribed singularity}$ also asserts that $u_0 \in \NNP(\phi)$.

We define the following set
$$\mathcal{A}=\{ u \in \NNP(\phi): u \geq u_0\}.$$
Since $u_0 \in \mathcal{A}$, we have $\mathcal{A} \neq \emptyset$. It follows from \cite[Theorem 3.2.12]{Ho} that $\mathcal{A}$ is compact in $L^1_{\text{loc}}(\Om, dV)$.
We now show that  $\mathcal{A}$ is convex. Indeed, let $w_1, w_2 \in \mathcal{A}$ and $t \in (0, 1)$. By the definition of $w_1$ and $w_2$, there exists $v_1, v_2 \in \NNP$ such that 
$$v_j + \phi \leq w_j \leq \phi, \quad \text{for  } j =1, 2.$$
It follows that $$tv_1+(1-t)v_2 + \phi \leq tw_1+(1-t)w_2 \leq \phi.$$
By Lemma \ref{NNP is convex} that $tv_1+(1-t)v_2 \in \NNP$. Consequently, $tw_1+(1-t)w_2 \in \NNP(\phi)$, which shows that $\mathcal{A}$ is convex, as desired.\\
Next, for each $u \in \mathcal{A}$, we define the map $T : \mathcal{A} \to \mathcal{A}$ by $u \mapsto \hat{u}$, where
$$\hat{u}=\left( \sup\{w \in \pshn: w \leq \phi,\ \NP{w} \geq F(u, \cdot)d\mu\} \right)^*.$$
We claim that $T$ is well-defined. Indeed, it follows from the first and the third assumptions on $F$ that there exists   $\psi \in \NNP$ such that
$$\NP{\psi} \geq F(\phi, \cdot)d\mu \geq F(u, \cdot)d\mu.$$
Thus, by Theorem $\ref{MA w prescribed singularity}$, the function $\hat{u}$  is the unique solution to the following equation
    \begin{equation*}
		\begin{cases}
		\NP{\hat{u}}=F(u, \cdot)\mu,\\
		P[\hat{u}]=\phi.
		\end{cases}
    \end{equation*}
Since $u \in \NNP(\phi)$, we have $u \leq \phi$. Moreover, because  $F$ is nondecreasing in the first argument, we obtain
$$\NP{\hat{u}}=F(u, \cdot)\mu \leq F(\phi, \cdot)\mu=\NP{u_0}.$$
Furthermore, since both $\hat{u}$ and $u_0$ belong to $\NNP(\phi)$, it follows from \cite[Corollary 5.7]{DDLP} that $\hat{u} \geq u_0$. Hence, $\hat{u} \in \mathcal{A}$, and so $T$ is well-defined.\\
We also claim that $T$ is continuous. To prove this, let $\{u_j\}_j \subset \mathcal{A}$ be a sequence converging  to some $u \in \mathcal{A}$. By Lemma \ref{stability}, it suffices to verify that
$$F(u_j , .) d\mu \to F(u, .)d \mu .$$ 
After passing to a subsequence if necessary, we may assume that
$u_j \to u $ almost everywhere with respect to $dV $. By \cite[Lemma 2.1]{Ce1},  we also have $u_j \to u $
$d\mu$-almost everywhere. Then, by the Lebesgue dominated convergence theorem, we conclude that
$F(u_j , \cdot) d\mu \to F(u, \cdot)d \mu .$\\
By Schauder-Tychonoff's fixed point theorem, there exists $u \in \mathcal{A}$ such that $T(u)=u$. This means that $u$ is a solution to the problem \eqref{NPMAE}. The uniqueness of $u$ follows from \cite[Corollary 5.7]{DDLP}.

We now proceed to the proof of the general case. For each $j \geq 1$, we define
$$\Omega_j=\{z \in \Om: d(z, \partial\Om)>2^{-j}, \phi+\psi>-2^{-j}\}.$$
Then $\{\Om_j\}_j$ is an increasing sequence of plurifine open subsets of $\Om$ and $\Om_j \Subset \Om$ for all $j \geq 1$. For each $j \in \N^*$, we set $\mu_j=\mathbb{1}_{\Om_j}\mu$. By the special case already treated, for each $j$, there exists a uniquely determined function $u_j \in \NNP(\phi)$ solving the non-pluripolar complex Monge–Ampère equation
\begin{equation}
		\begin{cases}
		\NP{u_j}=F(u_j, \cdot)d\mu_j,\\
		P[u_j]=\phi.
		\end{cases}
		\end{equation}
        For every $j \geq 1$, on the set $\{u_j < u_{j+1}\}$, we have
        $$\NP{u_j}=F(u_j, \cdot)d\mu_j \leq F(u_{j+1}, \cdot)d\mu_j\leq F(u_{j+1}, \cdot)d\mu_{j+1}=\NP{u_{j+1}}.$$
        It thus follows from Corollary \ref{cor: comparison wrt NPMA 2} that $u_j \geq u_{j+1}$, so the sequence $\{u_j\}_j$ is decreasing.\\
         We now define  $u=\lim\limits_{j \to \infty}u_j$. By using the monotonicity of $F$ and  \cite[Lemma 2.6]{DDLP}, we have
        $$\NP{u_j}=F(u_j, \cdot)d\mu\leq F(\phi, \cdot)d\mu\leq\NP{\psi}\leq \NP{(\psi+\phi)},$$
        for all $j \geq 1$. Therefore, by  \cite[Corollary 5.7]{DDLP}, we conclude that
        \begin{equation}\label{phi+ psi< u}
            \phi+\psi \leq u_j, \quad\text{ for all } j \geq 1.
        \end{equation}
        Letting $j \to +\infty$, we get $u \geq \phi+\psi$. Hence, $u \in \NNP(\phi)$, $u \not\equiv -\infty$ and $P[u]=\phi.$\\
         We now prove that $u$ is a solution to equation \eqref{NPMAE}. Indeed, for every $j \geq 1$, we have
         \begin{equation}\label{NPuj=Fuj}
             \NP{u_j}=F(u_j, \cdot)d\mu_j.
         \end{equation}
        Fix $j_0 \in \mathbb{N}^*$, by the monotonicity of $F$ and the sequence  $\{\Omega_j\}_j$, we obtain
        $$\NP{u_j}=F(u_j, \cdot)d\mu_j \geq F(u, \cdot)d\mu_{j_0},$$
        for all $j \geq j_0 \geq 1$. Thus, since  $u=\lim\limits_{j \to +\infty}u_j$ is not identically $- \infty$, it follows from \cite[Lemma 3.2]{DDLP} that
        $$\NP{u} \geq F(u, \cdot)d\mu_{j_0},$$
        for every  fixed $j_0$. Letting  $j_0 \to +\infty$, it implies that 
        $$\NP{u} \geq \mathbb{1}_{\bigcup\limits_{j_0\geq 1}\Om_{j_0}} F(u, \cdot)d\mu.$$
        Moreover, since $\bigcup_{j_0\geq 1}\Om_{j_0}=\Om \setminus\{\phi+\psi=-\infty\}$ and $\mu$ be a Borel measure does not charge  pluripolar sets, we conclude that
        \begin{equation}\label{NP u > F u}
            \NP{u} \geq  F(u, \cdot)d\mu.
        \end{equation}
        To obtain the reverse inequality, fix $j_0 \in \mathbb{N}^*$. By \eqref{NPuj=Fuj}, we have
        $$\mathbb{1}_{\Om_{j_0}}\NP{u_j}=F(u_j, \cdot)d\mu_{j_0} \leq F(u_{j_0}, \cdot)d\mu_{j_0}.$$ 
        Again, since $\lim\limits_{j \to +\infty}u_j \not\equiv - \infty$, it follows from \cite[Lemma 3.3]{DDLP} that
        $$\mathbb{1}_{\Om_{j_0}}\NP{u} \leq F(u_{j_0}, \cdot)d\mu_{j_0},$$
        for every fixed $j_0 \in \N^*$. Letting $j_0 \to +\infty$ and using the continuity of $F$ in the first variable, we obtain
        $$\mathbb{1}_{\bigcup\limits_{j_0\geq 1}\Om_{j_0}}\NP{u}\leq \mathbb{1}_{\bigcup\limits_{j_0\geq 1}\Om_{j_0}}F(u, \cdot)d\mu.$$
        In addition, since $\Om$ differs from $\bigcup_{j_0\geq 1}\Om_{j_0}$ by a pluripolar set $\{\phi+\psi=-\infty\}$  and both Borel measures $\mu$ and  the non-pluripolar measure does not charge  pluripolar sets, we conclude that
        \begin{equation}\label{NP u < F u}
            \NP{u} \leq F(u, \cdot)d\mu.
        \end{equation}
        Combining inequalities \eqref{NP u > F u} and \eqref{NP u < F u}, we obtain
        $$\NP{u}=F(u, \cdot)d\mu,$$
        which means that $u$ is a solution to the equation \eqref{NPMAE}, as desired. The uniqueness of $u$ follows from \cite[Corollary 5.7]{DDLP}.
        
        The proof is completed.
\end{proof}
\noindent
	\textbf{Acknowledgments.}
The authors would like to thank Prof. Pham Hoang Hiep and Assoc. Prof. Do Hoang Son for their valuable comments.

\end{document}